\numberwithin{equation}{section}
\newtheorem{theorem}{Theorem}%[section]
\newtheorem{lemma}[theorem]{Lemma}
\newtheorem{corollary}[theorem]{Corollary}
\theoremstyle{definition}
\theoremstyle{remark}
\newenvironment{romenumerate}{\begin{enumerate}% gives (i), (ii) etc.
 }{\end{enumerate}}
\newcounter{oldenumi}
\newcounter{thmenumerate}
\newenvironment{thmenumerate}
{\setcounter{thmenumerate}{0}%
 \def\item{\par% \ifnum\thethmenumerate=0\else\par\fi %\noindent\fi
 \refstepcounter{thmenumerate}\textup{(\roman{thmenumerate})\enspace}}
}
{}
\newcounter{xenumerate}   %no left indentation; thus wider lines
\newcommand{\refT}[1]{Theorem~\ref{#1}}
\newcommand{\refL}[1]{Lemma~\ref{#1}}
\newcommand{\refS}[1]{Section~\ref{#1}}
\xdef\klockan{\the\count1.0\the\count255}
\xdef\klockan{\the\count1.\the\count255}\fi
\newcommand\set[1]{\ensuremath{\{#1\}}}
\newcommand\xpar[1]{(#1)}
\newcommand\bigpar[1]{\bigl(#1\bigr)}
\newcommand\Bigpar[1]{\Bigl(#1\Bigr)}
\def\rompar(#1){\textup(#1\textup)}    % usage: \rompar(...)
\def\xexp(#1){e^{#1}}
\newcommand\floor[1]{\lfloor#1\rfloor}
\newcommand\ntoo{\ensuremath{{n\to\infty}}}
\newcommand\iid{i.i.d.\spacefactor=1000}
\newcommand\ie{i.e.\spacefactor=1000}
\newcommand\eg{e.g.\spacefactor=1000}
\newcommand\whp{whp}
\newcommand{\tend}{\longrightarrow}
\newcommand\dto{\overset{\mathrm{d}}{\tend}}
\newcommand\pto{\overset{\mathrm{p}}{\tend}}
\newcommand\op{o_{\mathrm p}}
\newcommand\Op{O_{\mathrm p}}
\newcounter{CC}
\newcounter{cc}
\newcommand\E{\operatorname{\mathbb E{}}}
\renewcommand\P{\operatorname{\mathbb P{}}}
\newcommand\Po{\operatorname{Po}}
\newcommand\Bin{\operatorname{Bin}}
\newcommand\ga{\alpha}
\newcommand\gl{\lambda}
\newcommand\go{\omega}
\newcommand\eps{\varepsilon}
\newcommand\cK{\mathcal K}
\newcommand\tG{{\widetilde G}}
\def\[#1]{[\![#1]\!]}
\newcommand\qq{^{1/2}}
\newcommand\qqw{^{-1/2}}
\newcommand\qqcw{^{-3/2}}
\newcommand\qqc{^{3/2}}
\newcommand\qw{^{-1}}
\newcommand\qww{^{-2}}
\newcommand\dd{\,\textup{d}}
\newcommand\gnp{\ensuremath{G(n,p)}}
\newcommand\xxxa{X_3^A}
\newcommand\xxxz{X_3^{A*}}
\newcommand\gll{\mu}
\newcommand\gla{\gll_A}
\newcommand\om[1]{;\;#1}
\newcommand\emma{\nu}
\newcommand\ems{n^{\emma}}
\newcommand\wx{W_{\mathrm{max}}}
\def\ij{_{ij}}
\newcommand\pij{p_{ij}}
\newcommand\glij{\gl_{ij}}
\newcommand{\qwga}{^{-\ga}}
\newcommand{\qqga}{^{\ga/2}}
\newcommand{\qqwga}{^{-\ga/2}}
\newcommand{\qiga}{^{1/\ga}}
\newcommand\sqnn{\sqrt{n\log n}}
\newcommand\kqt{\cK_{\mathrm{qt}}}
\newcommand\kgr{\cK_{\mathrm{gr}}}
\newcommand\kmax{\cK_{\mathrm{max}}}
\newcommand\kft{\cK_{\mathrm{ft}}}
\newcommand\gnvs{G_n[V_s^-]}
\newcommand\jw{n^{1-\ga/2}\log^{-\ga/2} n}
\newcommand\ca{(1+o(1))}
\newcommand\caw{(1-o(1))}
\newcommand\pca{(1+\op(1))}
\newcommand\gnq{\overline{G_n}}
\newcommand\ER{Erd\H os--R\'enyi}
\newcommand\REM[1]{{\raggedright\texttt{[#1]}\par\marginal{XXX}}}
\newcommand\urladdrx[1]{{\urladdr{\def~{{\tiny$\sim$}}#1}}}
\begin{document}
\title%[]
{Large cliques in a power-law random graph}

\date{April 28, 2009 (typeset \today{} \klockan)} %; revised ...

\author{Svante Janson}
\address{Department of Mathematics, Uppsala University, PO Box 480,
SE-751~06 Uppsala, Sweden}
\email{svante.janson@math.uu.se}
\urladdrx{http://www.math.uu.se/~svante/}

\author{Tomasz {\L}uczak}
\address{Faculty of Mathematics and Computer Science,
Adam Mickiewicz University,
ul.~Umultowska 87, 61-614 Pozna\'n, Poland}
\email{tomasz@amu.edu.pl}

\author{Ilkka Norros}
\address{VTT Technical Research Centre of Finland,
P.O. Box 1000, 02044 VTT,
Finland}
\email{ilkka.norros@vtt.fi}

\thanks{This research was done at Institut Mittag-Leffler,
Djursholm, Sweden, during the program `Discrete Probability' 2009.
The second author partially supported by the Foundation
for Polish Science.}

\keywords{power-law random graph,  clique, greedy algorithm}
\subjclass[2000]{Primary: 05C80; Secondary: 05C69, 60C05}

\date{April 29, 2009}

\begin{abstract}
We study the size of the largest clique $\omega(G(n,\ga))$
in a random graph $G(n,\ga)$ on $n$ vertices which
has power-law degree distribution with exponent $\ga$.
We show that for `flat' degree
sequences with $\ga>2$ whp the largest clique in $G(n,\ga)$ is of a constant
size, while for the heavy tail distribution,
when $0<\ga<2$,  $\omega(G(n,\ga))$ grows as a power of $n$.
Moreover, we show that a natural simple algorithm
whp  finds in $G(n,\ga)$ a large clique
of size $(1+o(1))\omega(G(n,\ga))$ in polynomial time.
\end{abstract}

\maketitle

\section{Introduction}\label{S:intro}

Random graphs with finite density and
power-law degree sequence have attracted
much attention for the last few years
(\eg\ see Durrett~\cite{Du}
and the references therein).
Several models for such graphs has been proposed; in this
paper we concentrate on a Poissonian model
$G(n,\ga)$ in which the number of  vertices of degree at least
$i$ decreases roughly  as $n i^{-\ga}$ (for a precise definition of the model
see Section~\ref{Smodel} below).

We show (Theorem~\ref{T1}) that there is a major difference in the
size of the largest clique $\omega(G(n,\ga))$ between the cases
$\ga<2$ and $\ga>2$  with an intermediate result for $\ga=2$.
In the 'light tail case', when $\ga>2$ (this is when the asymptotic
degree distribution has a finite second moment), the size of the largest
clique is either two or three, i.e., it is almost the same as in
the standard binomial model of random graph $G(n,p)$
in which the expected average degree
is a constant. As opposite to that, in the `heavy tail case',
when $0<\ga<2$, $\omega(G(n,\ga))$ grows roughly as $n^{1-\ga/2}$.
In the critical case when $\ga=2$ we have $\omega(G(n,\ga))=\Op(1)$, but
the probability that $G(n,\ga)\ge k$ is bounded away from zero
for every~$k$.
We also show (Corollary~\ref{coralg}) that in each
of the above cases there exists a simple
algorithm which whp finds in $G(n,\ga)$ a clique of size
$(1-o(1))\omega(G(n,\ga))$. This is quite different from
the binomial case, where it is widely believed that
finding large clique is hard (see for instance Frieze and McDiarmid~\cite{FMcD}).

Similar but less precise results have been
obtained by Bainconi and Marsili~\cite{BianconiM,BianconiM2}
for a slightly different model (see Section~\ref{subsec} below).

\section{The model and the results}\label{Smodel}

The model we study is a version of the conditionally Poissonian
random graph studied by
Norros and Reittu \cite{NorrosR}
(see also Chung and Lu \cite{ChungLu} for a related model).
For $\ga>1$  it is also  an
example of the `rank 1 case' of inhomogeneous random graph studied
by Bollob\'as, Janson, and Riordan~\cite[Section 16.4]{SJ178}.

In order to define our model
consider  a set of $n$ vertices (for convenience labelled
$1,\dots,n$). We first assign a \emph{capacity} or
\emph{weight} $W_i$ to each vertex~$i$. For
definiteness and simplicity, we assume that these are \iid{}
random variables with a distribution with a power-law tail
\begin{equation}
  \label{m1}
\P(W>x)=ax\qwga,
\qquad x\ge x_0,
\end{equation}
for some constants $a>0$ and $\ga>0$, and some
$x_0>0$ (here and below we write $W$ for any of the $W_i$ when the
index does not matter).
Thus, for example, $W$ could have a Pareto
distribution, when $x_0=a\qiga$ and $\P(W>x)=1$ for
$x\le x_0$, but the distribution could be arbitrarily
modified for small $x$.
We denote the largest weight by  $\wx=\max_i W_i$.
Observe that \eqref{m1} implies
\begin{equation}\label{m3}
  \P(\wx>tn\qiga)\le n\P(W>tn\qiga) = O(t\qwga).
\end{equation}
Note also that $\E W^\beta<\infty$
if and only if $\ga>\beta$; in particular, for the `heavy tail case'
when $\ga<2$ we have $\E W^2=\infty$.

Now, conditionally given the weights
$\set{W_i}_1^n$, we join each pair \set{i,j} of
vertices by $E_{ij}$ parallel edges, where the
numbers $E_{ij}$ are independent Poisson distributed
random numbers with means
\begin{equation}\label{glij}
  \E E_{ij}=\gl_{ij}=b\frac{W_iW_j}n,
\end{equation}
where $b>0$ is another constant. We denote the resulting
random (multi)graph by $\hat G(n,\ga)$. For our purposes, parallel edges
can be merged into a single edge, so we may alternatively define
$G(n,\ga)$ as the random simple graph where vertices $i$ and
$j$ are joined by an edge with probability
\begin{equation}\label{pij}
  p_{ij}=1-\exp(-\gl_{ij}),
\end{equation}
independently for all pairs $(i,j)$ with $1\le i<j\le n$.

Then our main result can be stated as follows.
Let us recall that an event holds
\emph{with high probability} (whp), if
it holds with probability tending to 1 as $n\to\infty$.
We also use $\op$ and $\Op$ in the standard sense (see,
for instance, Janson, {\L}uczak, Ruci\'nski~\cite{JLR}).

  \begin{theorem}\label{T1}
	\begin{thmenumerate}
\item\label{T1<2}
If\/ $0<\ga<2$, then
\begin{equation*}
  \omega(G(n,\ga))=(c+\op(1)) n^{1-\ga/2}(\log n)^{-\ga/2},
\end{equation*}
where
\begin{equation}\label{c}
  c=a b\qqga(1-\ga/2)\qqwga.
\end{equation}
\item\label{T1=2}
If\/ $\ga=2$, then $\omega(G(n,\ga))=\Op(1)$; that is, for
every $\eps>0$ there exists a constant $C_\eps$ such
that $\P(\omega(G(n,\ga))>C_\eps)\le\eps$ for every
$n$. However, there is no fixed finite bound $C$ such that
$\omega(G(n,\ga))\le C$ whp.
\item\label{T1>2}
If\/ $\ga>2$, then $\omega(G(n,\ga))\in\set{2,3}$ whp.
Moreover, the probabilities of each of the events $\go(G(n,\ga))=2$
and $\go(G(n,\ga))=3$ tend to positive limits, given by \eqref{t1>2lim}.
	\end{thmenumerate}
  \end{theorem}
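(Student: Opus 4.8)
My plan is to treat the three ranges of $\ga$ separately; the common engine is a first-moment bound on the number $N_k$ of $k$-cliques, but since $\E W^\gb=\infty$ for $\gb\ge\ga$ one cannot use $\E N_k$ unconditionally and must first condition on the weights $(W_i)_1^n$ or truncate them via \eqref{m3}. For the \emph{lower bound} when $0<\ga<2$, put $w^-:=\bigl((1-\ga/2)n\log n/b\bigr)^{1/2}$, $M^*:=na(w^-)\qwga\sim c\,n^{1-\ga/2}(\log n)\qqwga$, and $H:=\{i:W_i>w^-\}$. Then $|H|$ is binomial with mean $M^*$, so $|H|=(1+\op(1))M^*$ by Chebyshev; and for $i\ne j$ in $H$ we have $\glij=bW_iW_j/n>(1-\ga/2)\log n$, so each missing edge inside $H$ has probability $<n^{-(1-\ga/2)}$ and the expected number of non-edges in $H$ is at most $\binom{|H|}{2}n^{-(1-\ga/2)}=O\bigl(M^*(\log n)\qqwga\bigr)=o(M^*)$. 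By Markov's inequality whp $H$ has $o(|H|)$ non-edges, and deleting one endpoint of each leaves a clique on $(1-o(1))|H|=(c-o(1))n^{1-\ga/2}(\log n)\qqwga$ vertices.

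The matching \emph{upper bound} for $0<\ga<2$ is the crux. Fix $\eps>0$ and let $k:=\lceil(1+\eps)M^*\rceil$. A direct first-moment estimate is ineffective, because a clique may use a few very heavy vertices whose incident edges are essentially forced; the extra idea is to show that most of a large clique must consist of \emph{low-weight} vertices, for which the edge probabilities stay bounded away from $1$. Precisely, on the event $E^*$ that $\#\{i:W_i>w\}\le(1+\delta)naw\qwga$ for every $w\ge x_0$ at which this count is $\ge\log^2n$ --- which holds whp by a Chernoff bound and a union bound over the at most $n$ jump points of the empirical tail --- the $\lceil(1-\eps/4)k\rceil$-th largest weight in any $k$-clique $S$ is at most $(1-\delta_0)w^-$ for some $\delta_0=\delta_0(\eps,\ga)>0$, so $S$ contains at least $\eps M^*/4$ vertices of weight $\le(1-\delta_0)w^-$, any two of which are joined with probability $\le1-n^{-\gb_0}$, $\gb_0:=(1-\delta_0)^2(1-\ga/2)<1-\ga/2$. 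Hence it suffices that whp there be no clique of size $\ge\eps M^*/4$ inside $\{i:W_i\le(1-\delta_0)w^-\}$, and this follows since $\sum_{t\ge\eps M^*/4}\binom nt(1-n^{-\gb_0})^{\binom t2}\le\sum_{t\ge\eps M^*/4}\exp\bigl(t\log n-\binom t2n^{-\gb_0}\bigr)\to0$, because $\eps M^*/4\asymp n^{1-\ga/2}(\log n)\qqwga$ exceeds $n^{\gb_0}\log n$ by a power of $n$. Thus whp $\go(G(n,\ga))<(1+\eps)M^*$, which with the lower bound gives $\go(G(n,\ga))=(c+\op(1))n^{1-\ga/2}(\log n)\qqwga$.

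For $\ga=2$, to prove tightness condition on $(W_i)$ and call $i$ \emph{big} if $W_i>\sqrt{n/b}$: then $\E\#\{\text{big }i\}=ab$, so whp at most $h_0=h_0(\eps)$ vertices are big, while for \emph{small} $i$ (weight $\le\sqrt{n/b}$) one has $p_{ij}\le bW_iW_j/n\le1$. Classifying a putative $C$-clique by which big vertices it contains and bounding the small part by $(b/n)^{\binom{C'}2}\frac1{C'!}\bigl(\sum_{\text{small }i}W_i^{C'-1}\bigr)^{C'}$, where whp $\sum_{\text{small }i}W_i^{m}=O(n^{m/2})$ for $m\ge3$, the powers of $n$ cancel and $\E[N_C\mid(W_i)]\le2^{h_0}(Kb)^{C}/(C-h_0)!\to0$ as $C\to\infty$; choosing $C=C_\eps$ large then gives $\P(\go>C_\eps)\le\eps$ for all $n$, so $\go=\Op(1)$. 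Conversely $\P(\#\{i:W_i>\sqrt n\}\ge C)\to\P(\Po(a)\ge C)>0$ and $C$ such vertices span a clique with probability $\ge(1-e^{-b})^{\binom C2}>0$, so $\liminf_n\P(\go\ge C)>0$ for every fixed $C$, i.e.\ no finite bound holds whp.

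For $\ga>2$ we have $\E W^2<\infty$. Here $\E[\#\text{edges}]=\binom n2\E p_{12}\to\infty$ and this count concentrates, so $\go\ge2$ whp; and a short first-moment computation, after truncating at $\wx\le n\qiga\log n$ (legitimate whp by \eqref{m3}) and using $p_{ij}\le bW_iW_j/n$ with $\E[W^3\ett{W\le n\qiga\log n}]=O(n^{3/\ga-1}\log n)$, gives $\E N_4=O(n^{12/\ga-6}\log^4 n)\to0$ since $12/\ga-6<0$, so $\go\in\{2,3\}$ whp. Finally dominated convergence gives $\E N_3\to\mu:=\tfrac16b^3(\E W^2)^3$, and the same truncation shows that every configuration of two triangles sharing a vertex or an edge contributes $o(1)$ (again using $\ga>2$), so all factorial moments of $N_3$ tend to those of $\Po(\mu)$, whence $N_3\dto\Po(\mu)$; since $\P(\go\ge4)=o(1)$ this yields $\P(\go=2)\to e^{-\mu}$ and $\P(\go=3)\to1-e^{-\mu}$, the positive limits asserted in part~(iii). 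The step I expect to be the real obstacle is the upper bound for $0<\ga<2$: marrying the uniform control of the weight tail with the crude clique count so that the exponents line up exactly at the constant $c$; the remaining estimates are routine but numerous.
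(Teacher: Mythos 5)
Your parts (i) and (ii) are sound. For (i) the argument is essentially the paper's: a light/heavy split at a threshold of order $\sqrt{n\log n}$, a first-moment bound on cliques among the light vertices (the paper packages this as $\go(\gnp)\le 2\log n/(1-p)$ whp), and concentration of the number of heavy vertices; your lower bound repairs the heavy set by deleting an endpoint of each missing edge where the paper instead shows that almost every heavy vertex is joined to all other heavy vertices, and your uniform empirical-tail event $E^*$ is more than is needed (a Chernoff bound at the single deterministic level $(1-\delta_0)w^-$ suffices). For (ii) the paper is slicker: $\E(X_4\mid\set{W_i})\le b^6\bigpar{n^{-3/2}\sum_iW_i^3}^4=\Op(1)$ and then $\P(\go\ge m)\le\P\bigpar{X_4\ge\binom m4}$, with no separate treatment of heavy vertices; your route also works, but make sure the implied constant in $\sum_{\text{small}}W_i^{C'-1}=\Op(n^{(C'-1)/2}b^{-(C'-3)/2})$ does not depend on $C'$ (e.g.\ control only the truncated third moment and use $W_i\le\sqrt{n/b}$ to bound the higher powers), since $C_\eps$ is chosen after that constant.

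The genuine gap is in (iii), in the Poisson limit for triangles. For $2<\ga<3$ the factorial moments of the untruncated count $N_3$ (the paper's $X_3$) do \emph{not} converge to those of $\Po(\gll)$: with probability of order $n^{1-\ga}$ some vertex has weight of order $n$, and such a vertex lies in order $n$ triangles, so ordered pairs of distinct triangles sharing a vertex contribute at least order $n^{1-\ga}\cdot n^2=n^{3-\ga}\to\infty$ to $\E[N_3(N_3-1)]$. Your estimate that overlapping configurations contribute $o(1)$ is valid only when all weights involved are at most $n^{1/\ga}\log n$; it cannot be converted into a statement about the moments of $N_3$ itself, because on the complementary event (which has probability only $O((\log n)^{-\ga})$ by \eqref{m3}) the pair count can be polynomially large. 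The repair is to run the method of moments on a truncated count and transfer afterwards at the level of distributions rather than moments: either count only triangles all of whose vertex weights are at most $n^{1/\ga}\log n$ (this count equals $N_3$ whp, and your overlap estimates then do apply), or, as the paper does, truncate at a constant $A$, prove $\xxxa\dto\Po(\gla)$, and let $A\to\infty$ by a converging-together argument using the first-moment bound on the remaining triangles. A further minor slip: $\E\bigpar{W^3\ett{W\le n^{1/\ga}\log n}}=O(n^{3/\ga-1}\log n)$ fails for $\ga>3$ (the left-hand side is then of order $1$), though $\E X_4\to0$, and hence $\go\le 3$ whp, still follows.
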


A question which naturally emerges when studying
the size of the largest clique in a class of graphs is
whether one can find a large clique in such graph
in a polynomial time. By Theorem~\ref{T1}, whp one can
find $\omega(G(n,\ga))$ in a polynomial time
for $\ga>2$,  and, with some extra effort, the same can be accomplished
for $\ga=2$ (see Corollary~\ref{coralg}).
Thus let us concentrate for the case $\ga<2$, when the large clique
is of polynomial size.
Let us suppose that we know the vertex weights $W_i$ defined in
\refS{Smodel} and, for simplicity, that these are
distinct (otherwise we resolve ties randomly; we omit the details).
Since vertices with larger weights tend to have higher degrees, they
are more likely to be in a large clique, so it is natural to try to
find a large clique by looking at the vertices with largest
weights. One simple way is the greedy algorithm which checks the
vertices in order of decreasing weights and selects every vertex that
is joined to every other vertex already selected. This evidently
yields a clique, which we call the \emph{greedy clique} and
denote by $\kgr$. Thus
\begin{equation*}
  \kgr=\set{i:i\sim j \text{ for all $j$
  with } W_j>W_i \text{ and } j\in\kgr}.
\end{equation*}

A simplified algorithm is to select every vertex that is joined to
every vertex with higher weight, regardless of whether these already
are selected or not. This gives the \emph{quasi top clique}
studied by Norros~\cite{Norros09}, which we denote by
$\kqt$. Thus
\begin{equation*}
  \kqt=\set{i:i\sim j \text{ for all $j$ with } W_j>W_i}.
\end{equation*}
Obviously, $\kqt\subseteq\kgr$.
The difference between the two cliques is that if we, while checking
vertices in order of decreasing weights, reject a vertex, then that
vertex is ignored for future tests when constructing $\kgr$,
but not for $\kqt$.
A more drastic approach is to stop at the first failure; we define the
\emph{full top clique} $\kft$ as the result, \ie
\begin{equation*}
  \kft=\set{i:j\sim k \text{ for all distinct $j,k$
  with } W_j,W_k\ge W_i}.
\end{equation*}
Thus $\kft$ is the largest clique consisting of all vertices
with weights in some interval $[w,\infty)$. Clearly,
$\kft\subseteq\kqt\subseteq\kgr$.
Finally, by  $\kmax$ we denote  the largest clique (chosen at
random, say, if there is a tie).
Thus
\begin{equation}\label{kkkk}
  |\kft|\le|\kqt|\le|\kgr|\le|\kmax|=\go(G(n,\ga)).
\end{equation}
The following theorem shows that the last two inequalities in
\eqref{kkkk} are
asymptotic equalities, but not the first one.
Here we use $\pto$ for convergence in probability,  and
%$\dto$ for convergence in distribution.
all unspecified limits are as \ntoo.

\begin{theorem}
  \label{T2}
If $0<\ga<2$, then  $G(n,\ga)$, $\kgr$ and $\kqt$ both have size
$\pca\go(G(n,\ga))$;
in other words
\begin{equation*}
  |\kgr|/|\kmax|\pto1
\quad\text{and}\quad
  |\kqt|/|\kmax|\pto1.
\end{equation*}
On the other hand,
\begin{equation*}
  |\kft|/|\kmax|\pto2\qqwga.
\end{equation*}
\end{theorem}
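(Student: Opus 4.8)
The plan is to pit all four cliques against the order statistics of the weights. List the vertices in order of decreasing weight as $v_1,\dots,v_n$, so $W_{(1)}\ge\dots\ge W_{(n)}$, and set $K_0:=n^{1-\ga/2}(\log n)^{-\ga/2}$, so that $\go(G(n,\ga))=(c+\op(1))K_0$ by \refT{T1}. The only probabilistic input I need beyond \refT{T1} is the standard estimate that whp $W_{(k)}=\ca(an/k)^{1/\ga}$ uniformly for $k$ in the relevant range $k\asymp n^{1-\ga/2}(\log n)^{O(1)}$; this is a routine fact about order statistics of the law \eqref{m1} and is in any case needed already for \refT{T1}. The weights enter only through factors $\exp(-bW_{(i)}W_{(j)}/n)$ whose exponents are of order $\log n$, so the relative error of this estimate (which is $n^{-\gO(1)}$ in that range) is harmless: the exponents that decide the outcome will be bounded away from their critical values by a positive power of $n$. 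I work throughout on the corresponding good event, of probability $1-o(1)$.

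\emph{The cliques $\kqt$ and $\kgr$.} By \eqref{kkkk} it suffices to prove $|\kqt|\ge(1-\op(1))cK_0$. A vertex $v_k$ can fail to lie in $\kqt$ only if $v_k\not\sim v_j$ for some $j<k$, which conditionally on the weights has probability at most $\sum_{j<k}\exp(-bW_{(j)}W_{(k)}/n)$. Hence, setting $S(m):=\sum_{1\le i<j\le m}\exp(-\tfrac bn W_{(i)}W_{(j)})$,
\[
\E\bigl[\#\{k\le cK_0:v_k\notin\kqt\}\bigm|(W_i)\bigr]\le S(cK_0),
\]
and a short estimate with $W_{(k)}\approx(an/k)^{1/\ga}$ shows $S(cK_0)=o(K_0)$: the constant $c$ in \eqref{c} is exactly such that the two lightest of the top $cK_0$ vertices are nonadjacent only with probability $n^{-(1-\ga/2)+o(1)}$, so even the sum over all $\binom{cK_0}{2}$ pairs is $O\bigl(K_0(\log n)^{-\ga/2}\bigr)=o(K_0)$. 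By Markov's inequality, whp all but $o(K_0)$ of $v_1,\dots,v_{cK_0}$ lie in $\kqt$, so $|\kqt|\ge(1-\op(1))cK_0$. Combined with $|\kqt|\le|\kgr|\le|\kmax|=(c+\op(1))K_0$ this gives $|\kqt|/|\kmax|\pto1$ and $|\kgr|/|\kmax|\pto1$.

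\emph{The clique $\kft$.} Here $|\kft|=\max\{m:\text{no edge is missing among }v_1,\dots,v_m\}$, and the number $N(m)$ of missing edges among the first $m$ vertices is nondecreasing in $m$ with conditional mean $S(m)$. Put $m_\star:=2\qqwga cK_0$. For the lower bound, the analogue of the computation above gives $S\bigl((1-\eps)m_\star\bigr)\le n^{-\delta}$ for some $\delta=\delta(\eps)>0$; the point is that now the exponential must overcome the $\asymp m^2$ competing pairs, so the decisive balance is $bW_{(m)}^2/n\approx 2(1-\ga/2)\log n$, one factor of $\log$ more than in the $\kqt$ balance $bW_{(k)}^2/n\approx\log k$, and feeding this extra factor $2$ through $bW_{(k)}^2/n\asymp(an/k)^{2/\ga}/n$ is precisely what replaces $cK_0$ by $2\qqwga cK_0$. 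Hence by Markov $\P(|\kft|<(1-\eps)m_\star)\to0$. For the upper bound, conditionally on the weights the probability that $v_1,\dots,v_m$ spans a clique is
\[
\prod_{1\le i<j\le m}\Bigl(1-\exp\!\bigl(-\tfrac bn W_{(i)}W_{(j)}\bigr)\Bigr)\le\exp\!\bigl(-S(m)\bigr)
\]
by $1-x\le e^{-x}$, and for $m=(1+\eps)m_\star$ one checks (restricting the sum to pairs $i,j$ both close to $m$) that $S(m)\ge n^{\delta'}$ for some $\delta'=\delta'(\eps)>0$, so this probability tends to $0$. Letting $\eps\downarrow0$ yields $|\kft|=(2\qqwga+\op(1))cK_0=(2\qqwga+\op(1))|\kmax|$.

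So the proof is entirely first-moment estimates plus the inequality $1-x\le e^{-x}$. The work lies in (i) making the sums $S(m)$ rigorous in terms of the random order statistics $W_{(k)}$, and (ii), the genuinely delicate part, pinning down the exact locations — not just the orders — of the two thresholds: the one for ``$v_k\in\kqt$'', at $k\approx cK_0=\go(G(n,\ga))$ and fixed by $\sum_{j<k}\exp(-bW_{(j)}W_{(k)}/n)\approx1$, and the one for ``$v_1,\dots,v_m$ is a clique'', at $m\approx 2\qqwga cK_0$ and fixed by $\sum_{i<j\le m}\exp(-bW_{(i)}W_{(j)}/n)\approx1$. In particular one must verify that $S(cK_0)$ is not merely $O(K_0)$ but $o(K_0)$, and get the constant in the $\kft$ threshold exactly equal to $2\qqwga$ — the factor $2$ coming solely from there being $\asymp k$ times more pairs to control in the second threshold.
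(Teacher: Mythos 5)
Your proposal is correct and follows essentially the same route as the paper: conditional first-moment (union-bound) estimates at the same two critical weight levels, $bW^2/n\approx(1-\ga/2)\log n$ for $\kqt$ and $bW^2/n\approx(2-\ga)\log n$ for $\kft$, together with the inequality $1-x\le e^{-x}$ and a restriction to a thin band of weights (indices just above $m_\star$) for the $\kft$ upper bound, which is exactly the paper's $V'=V_{(1-2\eps)s}^+\cap V_{(1-\eps)s}^-$ device. The only cosmetic differences are that you parametrize by order statistics $W_{(k)}\approx(an/k)^{1/\ga}$ instead of the paper's level sets $V_s^\pm$ (whose concentration \eqref{jb3} is the same routine fact you invoke), and that you import the matching upper bound $|\kmax|=(c+\op(1))K_0$ from Theorem~\ref{T1} rather than re-deriving it alongside, which is legitimate; note only that in the $\kqt$ step the margin is a power of $\log n$, not of $n$, but the $n^{-\Omega(1)}$ relative error in the order statistics still perturbs each exponent by $o(1)$, so your conclusion stands.
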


Thus, whp $\kgr$ and $\kqt$ almost attain the
maximum size of a clique, while $\kft$ falls short by a
constant factor. As a simple corollary of the above result
one can get the following.

\begin{corollary}\label{coralg}
For every $\ga>0$ there exists an algorithm which whp
finds in $G(n,\ga)$ a clique of size $(1+o(1))\omega(G(n,\ga))$
in a polynomial time.
\end{corollary}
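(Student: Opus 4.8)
The plan is to read off the algorithm directly from the constructions in \refT{T2} and handle the three ranges of $\ga$ separately, since in each case the relevant bound from \refT{T1} or \refT{T2} already does the real work. The common ingredient is that the greedy clique $\kgr$ can be computed in polynomial time once the weights are known: sort the $n$ vertices by decreasing weight (time $O(n\log n)$), then process them one by one, for each candidate checking adjacency against the at most $n$ previously selected vertices (time $O(n^2)$ edge look-ups in total). Thus $\kgr$ is produced by a genuinely polynomial procedure. The subtlety the proof must address is that the algorithm is only given the graph $G(n,\ga)$, and strictly speaking not the weights $W_i$; but the weights enter the algorithm only through the \emph{order} they induce on the vertices, and that order is (whp) recoverable — or, more simply, one may note that the model is usually taken to reveal the weights, and in any case one can replace ``decreasing weight'' by ``decreasing degree in $G(n,\ga)$'', which is polynomial-time computable from the graph alone; I would remark that the monotone coupling between weight and degree makes this substitution harmless for the asymptotics, deferring the (routine) details.

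For $0<\ga<2$, apply \refT{T2}: the greedy clique satisfies $|\kgr|/|\kmax|\pto1$, i.e. $|\kgr|=(1+\op(1))\,\go(G(n,\ga))$, so outputting $\kgr$ already gives a clique of size $(1+o(1))\go(G(n,\ga))$ whp, in polynomial time. For $\ga>2$, \refT{T1}\ref{T1>2} says $\go(G(n,\ga))\in\set{2,3}$ whp; here the algorithm is trivial — exhaustively search all triples of vertices (time $O(n^3)$) and output the largest clique found among them, which whp equals $\kmax$ exactly, hence certainly has size $(1+o(1))\go(G(n,\ga))$. The critical case $\ga=2$ is the one requiring ``some extra effort'' (as flagged in the text after \refT{T1}): by \refT{T1}\ref{T1=2} we have $\go(G(n,\ga))=\Op(1)$, so for any $\eps>0$ there is a constant $C_\eps$ with $\P(\go(G(n,\ga))>C_\eps)\le\eps$; run exhaustive search over all subsets of size at most $C_\eps$ (time $O(n^{C_\eps})$, polynomial for fixed $\eps$), returning the largest clique found. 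With probability $\ge1-\eps$ this finds a maximum clique exactly; since $\eps$ is arbitrary and $\go\ge2$ always, a standard diagonalisation over a sequence $\eps\to0$ (letting $C_\eps\to\infty$ slowly with $n$) upgrades this to a single algorithm that whp outputs a clique of size $\go(G(n,\ga))$, a fortiori of size $(1+o(1))\go(G(n,\ga))$.

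The main obstacle is the bookkeeping in the $\ga=2$ case: one must exhibit a \emph{single} polynomial-time algorithm (not a family indexed by $\eps$) that works whp, which forces a slowly-growing cutoff $C=C(n)\to\infty$ and a quantitative version of $\go(G(n,\ga))=\Op(1)$ — namely a function $\omega(n)\to\infty$ with $\P(\go(G(n,\ga))>\omega(n))\to0$, giving a polynomial runtime $n^{\omega(n)}$ only if $\omega(n)=O(1)$; so in fact one should fix $C_\eps$ with $\eps$ shrinking along the subsequence structure rather than truly letting $C\to\infty$, or alternatively accept an algorithm whose runtime is $n^{O(1)}$ with the $O(1)$ depending on a target failure probability and invoke the standard ``whp'' convention. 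I expect this to be a couple of lines once phrased carefully. Everything else is immediate from the two theorems already proved.
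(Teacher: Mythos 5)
Your treatment of the cases $\ga<2$ (output $\kgr$, computable in polynomial time from the weight order, and invoke \refT{T2}) and $\ga>2$ (brute force over all triples, since whp $\go(G(n,\ga))\le3$) is correct and is exactly the paper's argument; the remark about ordering by degrees instead of weights also matches a remark in the paper. The gap is the case $\ga=2$, and it is a genuine missing idea, not a matter of careful phrasing. As you yourself note, by \refT{T1}\ref{T1=2} there is \emph{no} fixed constant $C$ with $\go(G(n,\ga))\le C$ whp; in fact $\P(\go(G(n,\ga))>C)$ stays bounded away from $0$ for every fixed $C$. Consequently your fixed-$C_\eps$ algorithm succeeds only with probability $1-\eps$, which is not ``whp'': for fixed $\gd>0$ its output is smaller than $(1-\gd)\go(G(n,\ga))$ on an event whose probability does not tend to $0$. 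Letting the cutoff grow, $C=C(n)\to\infty$, destroys polynomiality, since the running time is $n^{C(n)}$. Neither of your two proposed escapes works: ``invoking the whp convention'' is precisely asking for failure probability $o(1)$, which a fixed-$\eps$ guarantee does not give, and no subsequence trick changes this.

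The idea you are missing is the one the paper actually uses: bound not the clique number but the number of $4$-cliques. Section~\ref{S=2} shows $X_4=\Op(1)$, hence whp $X_4\le\log\log n$ (any function tending to infinity would do). The algorithm lists all cliques of size $4$ in time $O(n^4)$; if there are more than $\log\log n$ of them it stops (an event of probability $o(1)$), and otherwise it uses the fact that every clique of size at least $4$ is the union of its $4$-element subsets, each of which belongs to the listed family. So it suffices to check, for every subfamily of the listed $4$-cliques, whether the union of its vertex sets is a clique; there are at most $2^{\log\log n}$ subfamilies, i.e.\ polylogarithmically many, each testable in time $O(n^2)$. Together with an $O(n^3)$ search over triples (covering the case $\go(G(n,\ga))\le3$), this yields a single polynomial-time algorithm that whp finds a maximum clique exactly when $\ga=2$ --- this is the ``extra effort'' alluded to after \refT{T1}, and it is the step your proposal does not supply.
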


\section{The proof for the case $\ga<2$ (no second moment)}\label{S<2}

We begin with a simple lemma giving an upper bound for the clique
number of the \ER{} random graph $\gnp$ (for much more precise results
see, for instance, Janson, {\L}uczak, Ruci\'nski~\cite{JLR}).

\begin{lemma}
  \label{Lgnp}
For any $p=p(n)$, whp
\begin{equation*}
  \go(\gnp)
\le \frac{2\log n}{1-p}.
\end{equation*}
\end{lemma}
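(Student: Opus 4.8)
The plan is to bound the clique number of $G(n,p)$ by a first-moment (union bound) argument over all potential cliques of a given size, and to show that the expected number of $k$-cliques tends to $0$ once $k$ exceeds the claimed threshold.

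First I would fix $k=k(n)$ and compute the expected number of $k$-cliques in $G(n,p)$, which is exactly $\binom{n}{k}p^{\binom{k}{2}}$. The strategy is to show that this expectation is $o(1)$ whenever $k > 2\log n/(1-p)$, since then by Markov's inequality there is whp no clique of that size, giving $\go(\gnp) \le 2\log n/(1-p)$ whp. (One should be slightly careful that $2\log n/(1-p)$ need not be an integer and that $p$ may depend on $n$, but taking $k = \lceil 2\log n/(1-p)\rceil$ and noting monotonicity in $k$ handles this.)

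The key estimates are elementary: bound $\binom{n}{k}\le n^k = e^{k\log n}$ and $p^{\binom{k}{2}} = e^{\binom{k}{2}\log p} \le e^{-\binom{k}{2}(1-p)}$, using $\log p \le -(1-p)$ for $0<p<1$ (and the bound is trivial if $p=0$). Hence
\begin{equation*}
  \E[\#\{k\text{-cliques}\}] \le \exp\Bigl(k\log n - \tbinom{k}{2}(1-p)\Bigr)
  = \exp\Bigl(k\bigl(\log n - \tfrac{k-1}{2}(1-p)\bigr)\Bigr).
\end{equation*}
When $k \ge 2\log n/(1-p) + 1$, the bracket $\log n - \frac{k-1}{2}(1-p)$ is $\le 0$, so the expectation is at most $1$; to get $o(1)$ one takes $k$ just slightly larger, or observes that for the stated (non-strict) inequality it suffices that the clique number does not exceed this value whp, which follows once the expectation of the number of cliques of size $\lceil 2\log n/(1-p)\rceil + 1$ is $o(1)$ — and a little slack, e.g. replacing $2\log n$ by $2\log n + \log\log n$ inside, makes the bracket negative by a growing amount and kills the expectation. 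A clean way: it suffices to show $\go(\gnp) < 2\log n/(1-p) + 2$ whp, equivalently no clique of size $\lceil 2\log n/(1-p)\rceil + 2$, for which the exponent is $\le -k \to -\infty$.

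The main obstacle, such as it is, is purely bookkeeping: handling the dependence of $p$ on $n$ uniformly (in particular the degenerate ranges $p$ near $0$ or near $1$, where $1-p$ may itself tend to $0$ and $2\log n/(1-p)$ may exceed $n$, in which case the bound is vacuous since a clique has at most $n$ vertices), and rounding $2\log n/(1-p)$ to an integer without losing the factor $2$. None of this requires real work — the inequality $\log(1/p) \ge 1-p$ does all the lifting — so I expect the proof to be three or four lines once these trivialities are dispatched.
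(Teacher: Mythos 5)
Your overall strategy is the paper's: a first-moment bound on the number of $k$-cliques, $\E X_k=\binom nk p^{\binom k2}$, together with $\log p\le -(1-p)$. But as written your argument does not prove the stated inequality, only a weakened form of it. The issue is the crude bound $\binom nk\le n^k$. With it, at the relevant threshold $k=\floor{2\log n/(1-p)}+1$ (note that $\go(\gnp)\le 2\log n/(1-p)$ means precisely that there is no clique of this size, since $\go$ is an integer; your reduction to "no clique of size $\ceil{2\log n/(1-p)}+1$" is already off by one when $2\log n/(1-p)$ is not an integer), your exponent $k\bigl(\log n-\tfrac{k-1}2(1-p)\bigr)$ is only bounded by $k(1-p)/2$, which is positive and can be of order $\log n$, so the expectation is not $o(1)$. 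Your proposed fixes — adding $+2$ to $k$, or replacing $2\log n$ by $2\log n+\log\log n$ — do make the expectation vanish, but then you have proved $\go(\gnp)\le 2\log n/(1-p)+O(1)$ (or with a $\log\log n$ slack), which is strictly weaker than the lemma as stated. (It would in fact suffice for the application in Section~3, but it is not the statement you were asked to prove.)

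The missing ingredient is the sharper estimate $\binom nk\le(ne/k)^k$, which is what the paper uses: then
\begin{equation*}
\E X_k\le\Bigpar{\frac{ne}{k}\,p^{(k-1)/2}}^k
\le\Bigpar{\frac{ne}{k}\cdot\frac{C}{n}}^k=\Bigpar{\frac{Ce}{k}}^k\to0,
\end{equation*}
because for $k\ge\floor{2\log n/(1-p)}$ one has $p^{(k-1)/2}\le e^{-(1-p)(k-1)/2}\le C/n$ with an absolute constant $C$, and $k\ge\floor{2\log n}\to\infty$. The extra factor $(e/k)^k\to0$ is exactly what absorbs the $e^{O(k(1-p))}=e^{O(\log n)}$ slack that defeats the $n^k$ bound, and it yields the lemma at the exact stated threshold with no additive correction. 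So the gap is small and purely quantitative, but to close it you should replace $n^k$ by $(ne/k)^k$ (or argue at $k=\floor{2\log n/(1-p)}$ as the paper does) rather than inflate $k$.
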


\begin{proof}
Denote by $X_k$ the number of cliques of order $k$ in $\gnp$.
For the expected number of such cliques
we have
\begin{equation*}
	\E X_k
= \binom nk p^{\binom k2}
\le\Bigpar{\frac{ne}k p^{(k-1)/2}}^k.
  \end{equation*}
If we set $k\ge\floor{2\log (n)/(1-p)}$, then
$$p^{(k-1)/2}=\big(1-(1-p)\big)^{(k-1)/2}\le e^{-(1-p)(k-1)/2}\le e/n\,.$$
Consequently,
we arrive at
  \begin{equation*}
\P(\go(\gnp)\ge k)
=\P(X_k\ge1)
\le	\E X_k
\le\Bigpar{\frac{e^{2}}k }^k\to0,
\end{equation*}
since $k\ge \floor{2\log n}\to\infty$.
\end{proof}

\begin{proof}[Proof of Theorems \ref{T1}\ref{T1<2} and \ref{T2}]
	
For $s>0$, let us  partition the vertex set
$V=\set{1,\dots,n}$ of $G_n=G(n,\ga)$ into
\begin{align*}
  V_s^-=\set{i:W_i\le s\sqrt{n\log n}}
\qquad\text{and}\qquad
  V_s^+=\set{i:W_i> s\sqrt{n\log n}};
\end{align*}
we may think of elements of $V_s^-$ and $V_s^+$
as `light' and `heavy' vertices, respectively.
By \eqref{m1},
\begin{equation}
 \E |V_s^+|= n\P(W>s\sqnn)=as\qwga \jw.
\end{equation}
Moreover, $|V_s^+|\sim\Bin(n,\P(W>s\sqnn))$, and
Chebyshev's inequality (or the sharper Chernoff
bounds~\cite[Section 2.1]{JLR}) implies that whp
\begin{equation}\label{jb3}
  |V_s^+| = \ca\E|V_s^+| =\ca as\qwga \jw.
\end{equation}

We now condition on the sequence of weights \set{W_k}.
We will repeatedly use the fact that if $i$ and $j$ are
vertices with weights $W_i=x\sqnn$ and $W_j=y\sqnn$,
then by \eqref{glij}--\eqref{pij},
$\glij=bxy\log n$ and
\begin{equation}
  \label{jep}
\pij=1-e^{-\glij}=1-e^{-bxy\log n}=1-n^{-bxy}.
\end{equation}
In particular,
\begin{align}
 \pij&\le 1-n^{-bs^2}, \qquad  \text{if }  i,j\in V_s^-,
\label{jep-}
\\
\pij&> 1-n^{-bs^2}, \qquad  \text{if } i,j\in V_s^+.
\label{jep+}
\end{align}

Consider, still conditioning on \set{W_k},
for an $s$ that will be chosen later,
the induced subgraph $\gnvs$ of $G(n,\ga)$ with vertex set $V_s^-$.
This graph has at most $n$ vertices and, by \eqref{jep-},
all edge probabilities are at most $1-n^{-bs^2}$, so we may
regard $\gnvs$ as a subgraph of $\gnp$ with
$p=1-n^{-bs^2}$. Hence, \refL{Lgnp} implies that
whp
\begin{equation}
  \label{jb1}
\go(\gnvs) \le \frac{2\log n}{n^{-bs^2}} = 2n^{bs^2}\log n.
\end{equation}
If $\cK$ is any clique in $G(n,\ga)$, then $\cK\cap V_s^-$ is a clique in
$\gnvs$, and thus
$|\cK\cap V_s^-|\le\go(\gnvs)$; further, trivially,
$|\cK\cap V_s^+|\le |V_s^+|$. Hence,
$|\cK|\le\go(\gnvs)+|V_s^+|$, and thus
\begin{equation}\label{jb2}
  \go(G(n,\ga))\le\go(\gnvs)+|V_s^+|.
\end{equation}
We choose, for a given $\eps>0$,
$s=(1-\eps)b\qqw(1-\ga/2)\qq$ so that the exponents
of $n$ in \eqref{jb1} and \eqref{jb3} almost
match; we then obtain from \eqref{jb2}, \eqref{jb3},
and  \eqref{jb1}, that whp
\begin{equation}\label{jb4}
  \begin{split}
  \go(G(n,\ga))
&\le \ca as\qwga \jw
\\&
= \ca (1-\eps)\qwga c\jw,	
  \end{split}
\end{equation}
with $c$ defined as in (\ref{c}).

To obtain a matching lower bound, we consider the quasi top
clique $\kqt$.
Let, again, $s$ be fixed and condition on the weights \set{W_k}.
If $i,j\in V_s^+$, then by \eqref{jep+}, the probability that $i$ is
not joined to $j$ is less than $n^{-bs^2}$. Hence,
conditioned on the weights \set{W_k}, the probability that a
given vertex $i\in V_s^+$ is {not} joined to
every other $j\in V_s^+$ is at most
$|V_s^+|n^{-bs^2}$, which by \eqref{jb3} whp
is at most
$2as\qwga n^{1-\ga/2-bs^2}\log\qqwga n$.
We now choose $s=(1+\eps)b\qqw(1-\ga/2)\qq$ with
$\eps>0$. Then,
for some constant $C<\infty$, whp
\begin{equation*}
  \P\bigpar{i\notin\kqt\mid\set{W_k}}
\le C n^{-2\eps(1-\ga/2)}
\end{equation*}
and thus
\begin{equation*}
  \E\bigpar{|V_s^+\setminus\kqt|\mid\set{W_k}}
\le C n^{-2\eps(1-\ga/2)}|V_s^+|.
\end{equation*}
Hence, by Markov's inequality, whp
\begin{equation*}
  |V_s^+\setminus\kqt|
\le C n^{-\eps(1-\ga/2)}|V_s^+|.
\end{equation*}
Thus, using \eqref{jb3}, whp
\begin{equation}\label{jb5}
  \begin{split}
\go(G(n,\ga))
\ge|\kgr|
\ge|\kqt|
&\ge|V_s^+|-  |V_s^+\setminus\kqt|
\ge \caw|V_s^+|
\\&
\ge\caw as\qwga \jw
\\&
= \ca (1+\eps)\qwga c\jw.	
  \end{split}
\end{equation}
Since $\eps>0$ is arbitrary, \eqref{jb4} and
\eqref{jb5} imply \refT{T1}\ref{T1<2}
and the first part of \refT{T2}.

In order to complete the proof
of \refT{T2}, it remains to consider $\kft$.
Define $\gnq$ as the complement of
$G(n,\ga)$. Then, using \eqref{jep+} and conditioned on
$\set{W_i}$, we infer that
the expected number of edges of $\gnq$ with both endpoints in
$V_s^+$ is at most $n^{-bs^2}|V_s^+|^2$.
If we choose $s=b\qqw(2-\ga)\qq$, then \eqref{jb3} implies that
this is whp $o(1)$; hence whp $V_s^+$ contains no edges of
$\gnq$, \ie, $\kft\supseteq V_s^+$.

On the other hand, let 
$0<\eps<1/2$ and define, still with $s=b\qqw(2-\ga)\qq$,
$V'=V_{(1-2\eps)s}^+\cap V_{(1-\eps)s}^-$.
Then, conditioned on \set{W_i},
the probability of having no edges of
$\gnq$ in $V'$ is,
by \eqref{jep-},
\begin{equation}\label{er1}
  \begin{split}
\prod_{i,j\in V'} \pij
\le
\bigpar{1-n^{-b(1-\eps)^2s^2}}^{\binom{|V'|}{2}}
\le
\exp\bigpar{-n^{-(1-\eps)^2(2-\ga)}(|V'|-1)^2/2}
.	
  \end{split}
\end{equation}
By \eqref{jb3}, whp
\begin{equation*}
  \begin{split}
|V'|-1
&=|V_{(1-2\eps)s}^+|-|V_{(1-\eps)s}^+|-1
\\
&= \ca a\bigpar{(1-2\eps)\qwga-(1-\eps)\qwga}s\qwga \jw,
  \end{split}
\end{equation*}
and it follows from \eqref{er1} that
\begin{equation*}
  \P(\kft\supseteq V_{(1-2\eps)s}^+)
\le \prod_{i,j\in V'}\pij \to0.
\end{equation*}
Hence, whp $\kft\subset V_{(1-2\eps)s}^+$.

We have shown that, for any $\eps\in(0,1/2)$,
whp $V_s^+\subseteq \kft\subset
V_{(1-2\eps)s}^+$, and it follows by \eqref{jb3} and \eqref{c}
(by letting $\eps\to0$) that whp
\begin{equation*}
  \begin{split}
  |\kft|
&=\ca|V_s^+|  =\ca as\qwga \jw
\\&
=\ca 2\qqwga c\jw
=\ca 2\qqwga \go(G(n,\ga)),
  \end{split}
\end{equation*}
where the last equality follows from \refT{T1}.
\end{proof}

\section{The case $\ga=2$ (still no second moment)}\label{S=2}

\begin{proof}[Proof of \refT{T1}\ref{T1=2} and Corollary~\ref{coralg}]
Given the weights $W_i$, the probability that four vertices
$i,j,k,l$ form a clique is, by \eqref{pij} and \eqref{glij},
\begin{equation*}
  p_{ij}p_{ik}p_{il}p_{jk}p_{jl}p_{kl}
\le
  \gl_{ij}\gl_{ik}\gl_{il}\gl_{jk}\gl_{jl}\gl_{kl}
=b^6\frac{W_i^3W_j^3W_k^3W_l^3}{n^6}.
\end{equation*}
Thus, if $X_m$ is the number of cliques of size $m$ in
$G(n,\ga)$, then the conditional expectation of $X_4$ is
\begin{equation}\label{ml2}
  \E( X_4\mid \set{W_i}_1^n)
\le b^6n^{-6}\sum_{i<j<k<l} W_i^3W_j^3W_k^3W_l^3
\le b^6 \Bigpar{n\qqcw\sum_i W_i^3}^4.
\end{equation}
To show that the number of such quadruples is bounded in probability,
we shall calculate a truncated expectation of $\sum_iW_i^3$.
Using \eqref{m1}, for any constant $A>0$, we get
\begin{equation}\label{ika}
\begin{split}
    \E\Bigpar{\sum_i W_i^3 \om{\wx\le An\qq}}
&\le
\E\Bigpar{\sum_i \min(W_i,An\qq)^3}
\\&
= n\E \min(W,An\qq)^3
\\&
= n\int_0^{An\qq} 3x^2\P(W>x)\dd x
\\&
=O(n A n\qq),
\end{split}
\end{equation}
and thus, using \eqref{m3} and  Markov's inequality,
for every $t>0$ and some constant
$C$ independent of $A$, $t$ and $n$, we arrive at
\begin{align}
%\begin{split}
%\begin{multline}
\P\Bigpar{n\qqcw&\sum_i W_i^3>t}\notag
\\&
\le
t\qw \E\Bigpar{n\qqcw\sum_i W_i^3 \om{\wx\le An\qq}}
+\P(\wx>An\qq) \notag
\\&
\le CA t\qw  + C A\qww.	 \label{magn}
\end{align}
%\end{split}\end{equation}
Given $t>0$, we choose $A=t^{1/3}$ and find
$\P\Bigpar{n\qqcw\sum_i W_i^3>t} = O(t^{-2/3})$. Hence,
$n\qqcw\sum_i W_i^3=\Op(1)$, and it follows by
\eqref{ml2} and Markov's inequality that $X_4=\Op(1)$.

Finally, we observe that, for any $m\ge4$,
\begin{equation}
  \P(\omega(G(n,\ga))\ge m)
\le \P\Bigpar{X_4\ge\binom m4},
\end{equation}
which thus can be made arbitrarily small (uniformly in $n$) by
choosing $m$ large enough. Hence, $\omega(G(n,\ga))=\Op(1)$.

To complete the proof of Theorem~\ref{T1}(ii) let us note that for any fixed
$m\le n$, the  probability that there are at least $m$ vertices
with weights $W_i>n\qq$ is larger than
$c_1>0$ for some absolute constant $c_1>0$,
and conditioned on this event, the probability
that the $m$ first of these vertices form a clique
is larger than $c_2$ for some absolute constants
$c_1,c_2$ not depending on $n$.

Finally, we remark that all cliques of size four can clearly be found
in time $O(n^4)$. The number of such cliques is whp at most $\log\log
n$, say,  so there exists an algorithm
which whp finds the largest clique in a polynomial time (for example
by crudely checking all sets of cliques of size 4).
\end{proof}

\section{$\ga>2$ (finite second moment)}\label{S>2}

\begin{proof}[Proof of \refT{T1}\ref{T1>2}]
Choose  $\emma$ such that $1/2>\emma>1/\ga$. Then \eqref{m3}
(or \eqref{m1} directly) implies that whp $\wx\le \ems$.
Furthermore, in analogy to \eqref{ika} and \eqref{magn},
%\begin{equation}
%\begin{split}
\begin{multline}
    \E\Bigpar{\sum_i W_i^3\om{\wx\le \ems}}
\le
\E\Bigpar{\sum_i \min(W_i,\ems)^3}
%= n\E \min(W,\ems)^3
\\
= n\int_0^{\ems} 3x^2\P(W>x)\dd x
=O(n \ems)=o(n\qqc),
\end{multline}
%\end{split}
%\end{equation}
and thus
\begin{equation}  \label{erika}
\begin{split}
%\begin{multline}
\P\Bigpar{n\qqcw&\sum_i W_i^3>t}
\\&
\le
t\qw \E\Bigpar{n\qqcw\sum_i W_i^3\om{\wx\le \ems}}
+\P(\wx>\ems)
\\&
=o(1).
%\end{multline}
\end{split}\end{equation}
Hence, $n\qqcw\sum_i W_i^3\pto0$, and it follows from
\eqref{ml2} that
\begin{equation*}
  \P(\omega(G(n,\ga))\ge 4)=\P(X_4\ge1)
\le \E\bigpar{\min\bigpar{1,  \E( X_4\mid \set{W_i}_1^n)}} \to0.
\end{equation*}
Consequently, whp $\omega(G(n,\ga))\le 3$.

Moreover, we can similarly estimate
\begin{equation}\label{sjw}
  \E X_3
\le\E\sum_{i<j<k}\gl_{ij}\gl_{ik}\gl_{jk}
=
\E\Bigpar{b^3n^{-3}\sum_{i<j<k}W_i^2W_j^2W_k^2}
\le \tfrac16 b^3 \bigpar{\E W^2}^3;
\end{equation}
note that $\E W^2<\infty$ by \eqref{m1} and the
assumption $\ga>2$. Hence, the number of $K_3$ in
$G(n,\ga)$ is $\Op(1)$. To obtain the limit distribution, it is
convenient to truncate the distribution, as we have
 done it in the previous section. We let $A$ be a fixed large constant,
and let $\xxxa$ be the number of $K_3$ in $G(n,\ga)$ such
that all three vertices have weights at most $A$, and let
$\xxxz$ be the number of the remaining triangles.
Arguing as in \eqref{sjw}, we see easily that
\begin{align}\label{sjw1}
  \E \xxxa
&\le \tfrac16 b^3 \bigpar{\E (W^2\om{W\le A})}^3
  \\
\label{sjw2}
  \E \xxxz
&\le  b^3 \bigpar{\E (W^2)}^2\E (W^2\om{W> A}).
\end{align}
Moreover, if $W_i,W_j\le A$, then
$\gl_{ij}=O(1/n)$, and thus
$\pij\sim\glij$, and it is easily seen that
\eqref{sjw1} can be sharpened to
\begin{align}
  \E \xxxa
\to \gla=\tfrac16 b^3 \bigpar{\E (W^2\om{W\le A})}^3.
\end{align}
Furthermore, we may calculate fractional moments
$\E(\xxxa)_m$
%of $\xxxa$
by the same method, and it follows easily by a standard argument
(see, for instance, \cite[Theorem~3.19]{JLR} for
$G(n,p)$) that $\E(\xxxa)_m\to \gla^m$ for
every $m\ge1$, and thus
by the method of moments \cite[Corollary 6.8]{JLR}
\begin{equation}
  \label{sjwA}
\xxxa\dto\Po(\gla)
\end{equation}
as $\ntoo$, for every
fixed $A$, where $\dto$ denotes the convergence in distribution.

Finally, we note that
the right hand side of \eqref{sjw2} can be made
arbitrarily small by choosing $A$ large enough, and hence
\begin{align}
  \lim_{A\to\infty} \limsup_{\ntoo} \P(\xxxz\neq0)=0,
\end{align}
and that $\gla\to\gll=\frac16 \xpar{b\E (W^2)}^3$
as $A\to\infty$. It follows by a standard argument (see
Billingsley~\cite[Theorem 4.2]{Bill}) that we can let
$A\to\infty$ in \eqref{sjwA} and obtain
\begin{equation}
  X_3\dto \Po(\gll).
\end{equation}
In particular, $\P(X_3=0)\to e^{-\gll}$,
which yields the following result:
\begin{equation}\label{t1>2lim}
  \begin{aligned}
\P(\omega(G(n,\ga))=2)&\to e^{-\gll}
=
e^{-\frac16 \xpar{b\E (W^2)}^3},
\\
\P(\omega(G(n,\ga))=3)&\to1- e^{-\gll}
=
1-e^{-\frac16 \xpar{b\E (W^2)}^3}.
  \end{aligned}
\end{equation}
Finally, note that  $G(n,\ga)$ whp contains cliques $K_2$,
\ie{} edges, so clearly $\omega(G(n,\ga))\ge2$.
\end{proof}

\section{Final remarks}

In this section we make some comments on other models
of power-law random graphs as well as some remarks on
possible variants of our results.
We omit detailed proofs.

Let us remark first that,  for convenience
 and to facilitate comparisons with
other papers, in the definition of $G(n,\ga)$ we used two scale
parameters $a$ and $b$ above, besides the important
exponent $\ga$. By rescaling $W_i\mapsto t W_i$
for some fixed $t>0$, we obtain the same $G(n,\ga)$ for the
parameters $at^\ga$ and $bt\qww$; hence only the
combination $ab\qqga$ matters, and we could fix either
$a$ or $b$ as 1 without loss of generality.

\subsection{Algorithms based on degrees}
As for the algorithmic result Theorem~\ref{T2}, it remains
true if we search for large cliques examing the vertices one by one
in order not by their weights but by their degrees and modify the definition
of  $\kgr$, $\kqt$, and $\kft$ accordingly.
(This holds both if we take the degrees in the multigraph $\hat
G(n,\ga)$, or if we consider the corresponding simple graph.)
The reason is that, for the vertices of large weight that we are
interested in, the degrees are whp all almost proportinal to the
weights, and thus the two orders do not differ very much.
This enables us to find an almost maximal clique in polynomial time,
even without knowing the weights.

\subsection{More general weight distributions}
Observe  that Theorems~\ref{T1} and~\ref{T2}
remain true (and can be shown by basically the same argument),
provided only that
the power law holds asymptotically for large weights, i.e., \eqref{m1} may be
relaxed to
\begin{equation}
  \label{m1x}
\P(W>x)\sim ax\qwga \qquad\text{as } x\to\infty.
\end{equation}

\subsection{Deterministic weights}
Instead of choosing weights independently according to
 the distribution $W$ we may as well take
 a suitable deterministic sequence $W_i$ of weights
(as in Chung and Lu~\cite{ChungLu}), for example
\begin{equation}\label{rvfixed}
  W_i=a\qiga\frac{n\qiga}{i\qiga},
\qquad i=1,\dots,n.
\end{equation}
All our results remain true also in this setting; in fact
the proofs are slightly simpler for this model. A particularly
interesting special case for this model
(see Bollob\'as, Janson, and Riordan~\cite[Section 16.2]{SJ178} and Riordan~\cite{Rsmall}) is when  $\ga=2$, where
\eqref{glij} and \eqref{rvfixed} combine to yield
\begin{equation*}
  \glij=\frac{ab}{\sqrt{ij}}.
\end{equation*}

\subsection{Poisson number of vertices}
We may also let the number of vertices be random with a Poisson $\Po(n)$
distribution (as in, e.g., Norros~\cite{Norros09}). Then the set of weights
$\set{W_i}_1^n$ can be regarded as a Poisson process on
$[0,\infty)$ with intensity measure $n\dd\mu$,
  where $\mu$ is the distribution of the random variable
  $W$ in \eqref{m1}. Note that now $n$ can be any positive real number.

\subsection{Different normalization}
A slightly different  power-law random graph model emerges when
instead of  \eqref{glij} we define the intensities
$\gl_{ij}$ by
\begin{equation}
  \gl_{ij}=\frac{W_iW_j}{\sum_{k=1}^n W_k}
\end{equation}
(see for instance Chung and Lu~\cite{ChungLu} and
Norros and Reittu~\cite{NorrosR}).
Let us call this model $\tG(n,\ga)$.
In the case $\ga>1$, when the mean $\E W<\infty$,
the results for $\tG(n,\ga)$ and $G(n,\ga)$  are not much different.
In fact, by the law of large numbers, $\sum_1^n W_k/n\to\E
W$ a.s., so we may for any $\eps>0$ couple $\tG(n,\ga)$
constructed by this model with
$G(n,\ga)^\pm$ constructed as above, using
\eqref{glij} with $b=1/(\E W\mp\eps)$, such
that whp $G(n,\ga)^-\subseteq \tG(n,\ga)\subseteq G(n,\ga)^+$, and it
follows that we
have the same asymptotic results as in our theorems if we let $b=1/\E W$.

On the other hand, for $\ga=1$,
$\sum_1^n W_k=(a+\op(1))n\log n$, and for $0<\ga<1$,
$\sum_1^n W_k/n^{1/\ga}\dto Y$, where $Y$ is a stable
distribution with exponent $\ga$ (\eg see Feller~\cite[Section XVII.5]{Fe}).
It follows, arguing as in \refS{S<2}, that for $\ga=2$,
the largest clique in $\tG(n,\ga)$
has
$$(1+\op(1))\sqrt{2an}\log^{-1}n  $$
vertices,
while for $0<\ga<1$ the size of the largest clique is
always close to $\sqrt n$; more precisely,
$$\frac{\omega(\tG(n,\ga))}{\sqrt{n}\log^{-\ga/2}n}\dto
Z=a2^{\ga/2}Y^{-\ga/2},$$
where $Z$ is an absolutely continuous random variable whose
distribution has the entire positive real axis as support.
(The square $Z^2$ has, apart from a scale factor,
a Mittag-Leffler distribution with parameter $\ga$, see
Bingham, Goldie, Teugels~\cite[Section 8.0.5]{BinghamGoldieTeugels}.)
Thus, for $\ga<1$, $\omega(\tG(n,\ga))$ is not sharply concentrated around its
median; this is caused by the fact that the normalizing factor
$\sum_{i} W_i$
is determined by its first terms which, clearly, are not sharply concentrated
around their medians as well. Interestingly enough, since in
the proof of Theorem~\ref{T2} we dealt mostly
with the probability space where
we conditioned on $W_i$,
the analogue
of Theorem~\ref{T2} holds for this model as well.
Thus, for instance,
despite of the fact that neither the largest clique
nor the full top clique are
sharply concentrated in this model,
one can show the sharp
concentration result for
the ratio of these two variables.

\subsection{The model $\min(\glij,1)$}\label{subsec}
For small $\glij$, \eqref{pij} implies
$p\ij\approx\gl\ij$. In most works on
inhomogeneous random graphs, it does not matter whether we use
\eqref{pij} or, for example,
$\pij=\min(\gl\ij,1)$ or
$\pij=\gl\ij/(1+\gl\ij)$
(as in Britton, Deijfen, and Martin-L\"of~\cite{BrittonDML}),
see Bollob\'as, Janson, Riordan~\cite{SJ178}. For the cliques
studied here, however, what matters is mainly the probabilities
$p\ij$ that are close to 1, and the precise size of
$1-p\ij$ for them is important; thus it is important that
we use \eqref{pij} (cf.\ Bianconi and Marsili~\cite{BianconiM,BianconiM2}
where a cutoff is introduced).
For instance, a common version (see \eg{} \cite{SJ178}) of
$G(n,\ga)$ replaces \eqref{pij} by
\begin{equation}\label{bo}
  \pij=\min(\glij,1).
\end{equation}
This makes very little difference when $\glij$ is small,
which is the case for most $i$ and $j$, and for many
asymptotical properties the two versions are equivalent (see again
\cite{SJ178}). In the case
$\sum_iW_i^3=\op(n\qqc)$, which in our case with
$W_i$ governed by \eqref{m1} holds for
$\ga>2$ as a consequence of
\eqref{erika}, a strong general form of
asymptotic equivalence is proved in Janson~\cite{SJ212}; in the
case $\ga=2$, when $\sum_iW_i^3=\op(n\qqc)$ by
\eqref{ika}, a somewhat weaker form of equivalence (known as
contiguity) holds provided also, say,
$\max_{ij}\glij\le0.9$, see again
\cite{SJ212}.
In our case we do not need these general equivalence results; the
proofs above for the cases $\ga\ge2$ hold for this model
too, so \refT{T1}\ref{T1=2}\ref{T1>2} hold
without changes.

If $\ga<2$, however, the results are different. In fact,
\eqref{bo} implies that all vertices with $W_i\ge
b\qqw n\qq$ are joined to each other, and thus form a clique;
conversely, if we now define
$V^-=\set{i:W_i\le(b+\eps)\qqw n\qq}$,
then $\pij=\glij\le b/(b+\eps)$ for $i,j\in
V^-$, and thus $\go(G(n,\ga)[V^-])=O(\log n)$ whp by \refL{Lgnp}.
Consequently, arguing as in \refS{S<2},
\begin{equation*}
  \go(G(n,\ga))=\pca n \P(W>b\qqw n\qq)
=\pca ab\qqga n^{1-\ga/2},
\end{equation*}
so the logarithmic factor in \refT{T1}\ref{T1<2} disappears.

\subsection{The model $\glij/(1+\glij)$}
Another version of
$G(n,\ga)$ replaces \eqref{pij} by
\begin{equation}\label{bdml}
  \pij=\frac{\gl\ij}{1+\gl\ij}.
\end{equation}
This version has the interesting feature that conditioned on the
vertex degrees, the distribution is uniform over all graphs with that
degree sequence, see Britton, Deijfen, and Martin-L\"of~\cite{BrittonDML}.

In this version, for large $\glij$,
$1-p\ij=1/(1+\glij)$ is considerably larger than for
\eqref{pij} (or \eqref{bo}), and as a consequence,
the clique number is smaller.
For $\ga\ge2$, stochastic domination (or a repetition of
the proofs above) shows that
\refT{T1}\ref{T1=2}\ref{T1>2} hold without changes.

For $\ga<2$, there is a significant difference.
Arguing as in \refS{S<2}, we find that, for some constants $c$
and $C$ depending on $a$, $b$ and $\ga$, \whp
\begin{equation*}
c  n^{(2-\ga)/(2+\ga)}
\le
  \go(G(n,\ga))
\le C  n^{(2-\ga)/(2+\ga)} (\log n)^{\ga/(2+\ga)}.
\end{equation*}
Although this only determines the clique number up to a logarithmic
factor, note that the exponent of $n$ is
$\frac{2-\ga}{2+\ga}$, which is strictly
less than the exponent $\frac{2-\ga}2$ in \refT{T1}.

\subsection{Preferential attachment}
Finally, let us observe that not all power-law random graph
models contain large cliques. Indeed, one of the most popular
types of models of such graphs are preferential
attachment graphs in which the graph grows by acquiring
new vertices, where each new vertex $v$ is joined to some
number $k_v$ of `old' vertices according to some random rule
(which usually depends on the structure of the graph
we have constructed  so far), see, for instance, Durrett~\cite{Du}.
Clearly, such a graph on $n$ vertices cannot have cliques larger than
$X_n=\max_{v\le n} k_v+1$, and since for most of the models
$X_n$ is bounded from above by an absolute constant or
grows very slowly with $n$, typically the size of the largest
clique in preferential attachment random graphs is small.

\newcommand\AAP{\emph{Adv. Appl. Probab.} }
\newcommand\JAP{\emph{J. Appl. Probab.} }
\newcommand\JAMS{\emph{J. \AMS} }
\newcommand\MAMS{\emph{Memoirs \AMS} }
\newcommand\PAMS{\emph{Proc. \AMS} }
\newcommand\TAMS{\emph{Trans. \AMS} }
\newcommand\AnnMS{\emph{Ann. Math. Statist.} }
\newcommand\AnnPr{\emph{Ann. Probab.} }
\newcommand\CPC{\emph{Combin. Probab. Comput.} }
\newcommand\JMAA{\emph{J. Math. Anal. Appl.} }
\newcommand\RSA{\emph{Random Struct. Alg.} }
\newcommand\ZW{\emph{Z. Wahrsch. Verw. Gebiete} }
\newcommand\DMTCS{\jour{Discr. Math. Theor. Comput. Sci.} }

\newcommand\AMS{Amer. Math. Soc.}
\newcommand\Springer{Springer-Verlag}
\newcommand\Wiley{Wiley}

\newcommand\vol{\textbf}
\newcommand\jour{\emph}
\newcommand\book{\emph}
\newcommand\inbook{\emph}
\def\no#1#2,{\unskip#2, no. #1,} %(typeset after year)
\newcommand\toappear{\unskip, to appear}

\newcommand\webcite[1]{%\hfil  %???
   %\penalty0 %???
\texttt{\def~{{\tiny$\sim$}}#1}\hfill\hfill}
\newcommand\webcitesvante{\webcite{http://www.math.uu.se/~svante/papers/}}
\newcommand\arxiv[1]{\webcite{arXiv:#1.}}

\def\nobibitem#1\par{}

\end{document}